\documentclass[12pt]{amsart}
\usepackage{amscd}
%
%
%
\def\NZQ{\Bbb}               
\def\NN{{\NZQ N}}

\def\A{{\mathcal A}}

%
\def\frk{\frak}               

\def\Phi{{\frk n}}
\def\Phi{{\frk N}}
%
%

%
\def\opn#1#2{\def#1{\operatorname{#2}}} 
%
\opn\chara{char} \opn\length{\ell} \opn\pd{pd} \opn\rk{rk}
\opn\projdim{proj\,dim} \opn\injdim{inj\,dim} \opn\rank{rank}
\opn\depth{depth} \opn\grade{grade} \opn\height{height}
\opn\embdim{emb\,dim} \opn\codim{codim}

\opn\Tr{Tr} \opn\bigrank{big\,rank}
\opn\superheight{superheight}\opn\lcm{lcm}
\opn\trdeg{tr\,deg}
\opn\reg{reg} \opn\lreg{lreg} \opn\ini{in} \opn\lpd{lpd}
\opn\size{size}
%
\opn\div{div} \opn\Div{Div} \opn\cl{cl} \opn\Cl{Cl}
%
%
\opn\Spec{Spec} \opn\Supp{Supp} \opn\supp{supp} \opn\Sing{Sing}
\opn\Ass{Ass} \opn\Min{Min}
%
%
\opn\Ann{Ann} \opn\Rad{Rad} \opn\Soc{Soc}
%
%
\opn\Im{Im} \opn\Ker{Ker} \opn\Coker{Coker} \opn\Am{Am}
\opn\Hom{Hom} \opn\Tor{Tor} \opn\Ext{Ext} \opn\End{End}
\opn\Aut{Aut} \opn\id{id}

\opn\nat{nat}
\opn\pff{pf}
\opn\Pf{Pf} \opn\GL{GL} \opn\SL{SL} \opn\mod{mod} \opn\ord{ord}
\opn\Gin{Gin} \opn\Hilb{Hilb}
%
%
\opn\aff{aff} \opn\con{conv} \opn\relint{relint} \opn\st{st}
\opn\lk{lk} \opn\cn{cn} \opn\core{core} \opn\vol{vol}
\opn\link{link} \opn\star{star}
\opn\gr{gr}

%
%

\def\pot#1#2{#1[\kern-0.28ex[#2]\kern-0.28ex]}

%
%
\opn\dirlim{\underrightarrow{\lim}}
\opn\inivlim{\underleftarrow{\lim}}
%
%
%

\let\sect=\cap

\let\Union=\bigcup

%
%
\let\to=\rightarrow

\def\Implies{\ifmmode\Longrightarrow \else
        \unskip${}\Longrightarrow{}$\ignorespaces\fi}
\def\implies{\ifmmode\Rightarrow \else
        \unskip${}\Rightarrow{}$\ignorespaces\fi}
\def\iff{\ifmmode\Longleftrightarrow \else
        \unskip${}\Longleftrightarrow{}$\ignorespaces\fi}

\let\:=\colon
\newtheorem{Theorem}{Theorem}[section]
\newtheorem{Lemma}[Theorem]{Lemma}
\newtheorem{Corollary}[Theorem]{Corollary}
\newtheorem{Proposition}[Theorem]{Proposition}
\newtheorem{Remark}[Theorem]{Remark}

\newtheorem{Example}[Theorem]{Example}

\newtheorem{Definition}[Theorem]{Definition}

%
\opn\Syz{Syz} \opn\Im{Im} \opn\Ker{Ker} \opn\Coker{Coker}
\opn\Am{Am} \opn\Hom{Hom} \opn\Tor{Tor} \opn\Ext{Ext} \opn\End{End}
\opn\Aut{Aut} \opn\id{id}

\opn\nat{nat}
\opn\pff{pf}
\opn\Pf{Pf} \opn\GL{GL} \opn\SL{SL} \opn\mod{mod} \opn\ord{ord}
\opn\Gin{Gin}\opn\min{min}
\opn\Hilb{Hilb}\opn\adeg{adeg}\opn\std{std}\opn\ip{infpt}
\opn\Pol{Pol}\opn\sdepth{sdepth}\opn\infpt{infpt}
\opn\depth{depth}\opn\sqdepth{sqdepth}\opn{\Mon}{Mon}

\let\epsilon\varepsilon
\let\phi=\varphi
\let\kappa=\varkappa
%
%
\textwidth=15cm \textheight=22cm \topmargin=0.5cm
\oddsidemargin=0.5cm \evensidemargin=0.5cm \pagestyle{plain}
%
%
\def\qed{\ifhmode\textqed\fi
      \ifmmode\ifinner\quad\qedsymbol\else\dispqed\fi\fi}
\def\textqed{\unskip\nobreak\penalty50
       \hskip2em\hbox{}\nobreak\hfil\qedsymbol
       \parfillskip=0pt \finalhyphendemerits=0}
\def\dispqed{\rlap{\qquad\qedsymbol}}

%
\opn\dis{dis}
\def\pnt{{\raise0.5mm\hbox{\large\bf.}}}

\opn\Lex{Lex}



\begin{document}

\title{Stanley Decompositions and Polarization}

\author{ Sarfraz Ahmad }

\address{Sarfraz Ahmad, COMSATS Institute of Information Technology,
    Lahore,Pakistan.}\email{sarfraz11@gmail.com}
  \maketitle

\begin{abstract}
We define nice partitions of the multicomplex associated to a Stanley ideal.
As the main result we show that if the monomial ideal $I$ is a CM Stanley ideal,  then  $I^p$ is a Stanley ideal as well, where $I^p$ is the polarization of $I$. \vskip 0.4 true cm

 \noindent
{\it Key words} : Monomial Ideals, Partitionable Simplicial Complexes, Multicomplexes, Stanley
Ideals, Polarization.\\
{\it 2010 Mathematics Subject Classification: Primary 13H10, Secondary 13C14, 13F20, 13F55.}\\
\end{abstract}

\section*{Introduction}
Let $K$ be a field and $S=K[x_1,\ldots,x_n]$ be a polynomial ring in $n$ variables. Let $I\subset S$ be a monomial ideal, $u\in S/I$ be a monomial and $Z\subseteq \{x_1,\ldots,x_n\}$. We denote by $uK[Z]$ the $K$-subspace of $S/I$ generated by all elements $uv$ where $v$ is a monomial in $K[Z]$. The $K$-subspace $uK[Z]\subset S/I$ is called a {\em Stanley space} of dimension $|Z|$, if $uK[Z]$ is a free $K[Z]$-module. A decomposition of $S/I$ as a finite direct sum of Stanley spaces $\mathcal{P}:S/I=\oplus_{i=1}^r u_iK[Z_i]$ is called a {\em Stanley decomposition}.
Stanley \cite{St} conjectured that there always exists such a decomposition  such that $|Z_i|\geq \depth(S/I)$. If Stanley conjecture holds for $S/I$ then $I$ is
called a {\em Stanley ideal}. The conjecture is still open but true in some special cases
\cite{An1}, \cite{An2}, \cite{Ci1}, \cite{Ci2}, \cite{HJY}, \cite{HVZ}, \cite{Na}, \cite{Po1}, \cite{Po2}, \cite{As}.

Let $\Gamma$ be a subset of $\mathbb{N}^n_{\infty}$. An element $m\in \Gamma$ is called maximal if there is no $a\in \Gamma$ with $a>m$. We denote by $\mathcal{M}(\Gamma)$ the set of maximal elements of $\Gamma$. If $a\in \Gamma$, we call $\infpt(a)=\{i:a(i)=\infty\}$. An element $a\in\Gamma$ is called a facet of $\Gamma$ if for  all $m\in \mathcal{M}(\Gamma)$ with $a\leq m$ one has  $|\infpt(a)|=|\infpt(m)|$. Herzog and Popescu {\cite{HP}} modify the Stanley's definition of multicomplexes {\cite{St}}. $\Gamma$ is called a multicomplex if for all $a\in \Gamma$ and for all $b\in \mathbb{N}^{n}_{\infty}$ with $b\leq a$ it follows that $b\in \Gamma$ and for all $a\in\Gamma$ there is a maximal element $m$ in $\Gamma$ such that $a\leq m.$ We define an interval $\mathcal{I}$ of $\Gamma$ as a subset of $\Gamma$ for which there exists $a\leq b$ in $\Gamma$ such that
$\mathcal{I}=[a,b]=\{c\in \Gamma: a\leq c\leq b\}$. A partition $\mathcal{P}: \Gamma=\bigcup_{i=1}^{t}[a_i,b_i]$ of $\Gamma$ is a presentation of $\Gamma$ as a finite disjoint union of intervals $[a_i,b_i]$.

Monomial ideals $I$ in the polynomial ring $S=K[x_1,\ldots,x_n]$ and multicomplexes in $\mathbb{N}_{\infty}^{n}$ correspond each other bijectively. The multicomplex associated to a monomial ideal $I$ is denoted by $\Gamma(I)$ and similarly, $I(\Gamma)$ denotes the monomial ideal associated to the multicomplex $\Gamma$. We show that Stanley's conjecture holds for $S/I$ if and only if there exists a partition  of the multicomplex $\Gamma(I)$ such that $|\infpt(b_i)| \geq \depth(S/I)\text{ for all }\ i.$ Any partition of a multicomplex satisfying this condition will be called nice.

Let $I\subset S=K[x_1,\ldots,x_n]$ be a monomial ideal and $\Gamma(I)$ be the multicomplex associated to $I$. In (Proposition 1.3), we show that a partition
$\mathcal{P}: \Gamma(I)=\cup_{i=1}^t[a_i,b_i]$ of $\Gamma(I)$ is nice if all $b_i$'s are
facets of $\Gamma(I)$. Also, when $S/I$ is Cohen-Macaulay, we have this result in
both directions (see Corollary 1.4).

Let $I^p$ be the polarization of the monomial ideal $I$ and let
$\Gamma^p$ be the multicomplex associated to $I^p$. In Theorem 2.4, we prove that in case of Cohen-Macaulay monomial ideals, if $\Gamma$ has a nice partition then $\Gamma^p$ has a nice partition. The converse of this theorem is still open.

\medskip
\noindent
\textbf{Acknowledgements.} The author like to thank Professor J\"{u}rgen Herzog for his valuable suggestions which improves the final form of the paper.

\section{Partitions of Multicomplexes}

  Let $\Gamma$ be a subset of $\mathbb{N}^n$. We define on
 $\mathbb{N}^n$ the partial order given by $$(a(1),\ldots,a(n))\leq (b(1),\ldots,b(n))$$ if $a(i)\leq b(i)$ for all $i$. According to
 Stanley \cite{St} $\Gamma$ is a multicomplex if for all $a\in \Gamma$ and all
 $b\in \mathbb{N}^n$ with $b \leq a$, it follows that $b\in \Gamma$.
 The elements of $\Gamma$ are called faces.

  Herzog and Popescu \cite{HP} modify the Stanley's definition of
 multicomplexes. Before giving this definition we
 introduce some notations. We set $\mathbb{N}_{\infty}=\mathbb{N}\cup
 \{\infty\}$. As usual we set $a \leq \infty$ for all $a\in
 \mathbb{N}$, and extend the partial order on $\mathbb{N}^n$
 naturally to $\mathbb{N}^n_{\infty}$. Thus now we take $\Gamma$ as a subset of $\mathbb{N}^n_{\infty}.$

  An element $m\in \Gamma$ is called {\em maximal} if there is no $a\in
  \Gamma$ with $a>m$. We denote by $\mathcal{M}(\Gamma)$ the set of
  maximal elements of $\Gamma$. If $a\in \Gamma$, we call
  $$\infpt(a)=\{i:a(i)=\infty\}$$ the {\em infinite part} of $a$.
\begin{Definition}
A subset $\Gamma \subset \mathbb{N}^n_{\infty}$ is called a
multicomplex if
\begin{enumerate}
\item for all $a\in \Gamma$ and for all $b\in \mathbb{N}^n_\infty$
 with $b\leq a$ it follows that $b\in \Gamma$,

\item for all $a\in \Gamma$ there exists an element $m\in
 \mathcal{M}(\Gamma)$ such that $a\leq m$.
\end{enumerate}
 \end{Definition}
 An element $a\in \Gamma$ is called a {\it facet} of $\Gamma$ if for
 all $m\in \mathcal{M}(\Gamma)$ with $a\leq m$ one has
 $\infpt(a)=\infpt(m)$. The set of all facets of $\Gamma$ will be
 denoted by $\mathcal{F}(\Gamma)$. In \cite{HP} it is shown that
 each multicomplex has only a finite number of facets.

 Monomial ideals $I$  in the polynomial ring  $ S=K[x_1,\ldots, x_n]$ and multicomplexes in $\NN_\infty^n$ correspond each other bijectively. The bijection is defined as follows:  Let $\Gamma$ be a multicomplex, and  let $I(\Gamma)$ be the $K$-subspace in $S=K[x_1,\ldots,x_n]$ spanned by all monomials $x^a$ such that $a\not\in \Gamma$.
Note that if $a\in \NN_\infty^n$ and $b\in \NN_\infty^n\setminus\Gamma$, then $a+b\in \NN_\infty^n\setminus \Gamma$, that is, if $x^b\in I(\Gamma)$ then $x^ax^b\in I(\Gamma)$ for all $x^a\in S$. In other words, $I(\Gamma)$ is a monomial ideal. In particular, the monomials $x^a$ with $a\in \Gamma$ form a $K$-basis of $S/I(\Gamma)$.

Conversely, given an arbitrary monomial ideal $I\subset S$, there is a unique multicomplex $\Gamma$ with $I=I(\Gamma)$, namely the smallest multicomplex (with respect to inclusion) which contains $A =\{a\in \NN_\infty^n\:
x^a\not\in I\}$. Such a multicomplex exists and is uniquely determined since an arbitrary intersection of multicomplexes is again a multicomplex.

One has the following obvious rules: let $\{\Gamma_j, j\in J\}$ be a family of multicomplexes. Then
 \begin{enumerate}
\item[(a)]$I(\bigcap\limits_{j\in J}\Gamma_j)=\sum\limits_{j\in J}I(\Gamma_j)$,

\item[(b)] if $J$ is finite, then $I(\bigcup\limits_{j\in J}\Gamma_j)=\bigcap\limits_{j\in J}I(\Gamma_j)$
\end{enumerate}

Let $\Gamma \subset \mathbb{N}^n_\infty$ be a multicomplex. We
define an {\it interval} $\mathcal{I}$ of $\Gamma$   as a subset
of $\Gamma$ for which there exists $a\leq b$ in $\Gamma$ such that
$\mathcal{I}=\{c\in \Gamma: a\leq c\leq b\}$. We denote an interval given by faces $a$ and $b$
as $[a,b]$.
A {\em partition} $\mathcal{P}$ of $\Gamma$ is a
presentation of $\Gamma$ as a finite disjoint union of intervals.

\begin{Lemma}
\label{finite}
Let  $\mathcal{P}: \Gamma=\Union_{i=1}^t[a_i,b_i]$ be a partition of $\Gamma$. Then $\infpt(a_i)=\emptyset$ for all $i$.
\end{Lemma}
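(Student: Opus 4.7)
The plan is to argue by contradiction, exploiting the fact that a partition of $\Gamma$ consists of only \emph{finitely many} intervals while an element with an infinite coordinate has infinitely many predecessors along that direction. Suppose some $a_i$ has $\infpt(a_i)\neq\emptyset$, and pick $k$ with $a_i(k)=\infty$. Since every $c\in[a_i,b_i]$ satisfies $c\geq a_i$, the $k$-th coordinate of every element of $[a_i,b_i]$ is forced to equal $\infty$; in particular $b_i(k)=\infty$.

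Next I would manufacture a countable family of test faces that $[a_i,b_i]$ cannot absorb. For each $s\in\mathbb{N}$ let $a^{(s)}\in\mathbb{N}^n_\infty$ be obtained from $a_i$ by replacing the $k$-th coordinate with $s$. Then $a^{(s)}\leq a_i$, so the multicomplex axiom (Definition 1.1) places $a^{(s)}\in\Gamma$, while $a^{(s)}(k)=s<\infty$ rules out $a^{(s)}\in[a_i,b_i]$. Since the partition is finite, the pigeonhole principle supplies an index $j\neq i$ such that $a^{(s)}\in[a_j,b_j]$ for infinitely many $s$.

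For such $j$ the inequality $a^{(s)}\leq b_j$ holding for arbitrarily large $s$ forces $b_j(k)=\infty$, and gives $b_j(\ell)\geq a^{(s)}(\ell)=a_i(\ell)$ for every $\ell\neq k$. Pick any one such $s$; then $a_j\leq a^{(s)}$ yields $a_j(\ell)\leq a_i(\ell)$ for $\ell\neq k$ and $a_j(k)\leq s<\infty=a_i(k)$. Combining these inequalities produces $a_j\leq a_i\leq b_j$, so $a_i\in[a_j,b_j]\cap[a_i,b_i]$ with $j\neq i$, contradicting the disjointness required of a partition. The main obstacle is not to settle for a single test face (which only shows $a_j\neq a_i$) but to produce infinitely many, so that the finiteness of the partition can be used via pigeonhole to force $b_j(k)=\infty$; once $b_j(k)$ is infinite, $a_i$ fits into $[a_j,b_j]$ and the desired overlap appears.
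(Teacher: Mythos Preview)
Your argument is correct and follows essentially the same route as the paper's proof: both replace the infinite coordinate of $a_i$ by varying finite values, use pigeonhole on the finitely many intervals to trap infinitely many of these test faces in a single $[a_j,b_j]$, and deduce $a_i\in[a_j,b_j]$ for some $j\neq i$. Your write-up is in fact slightly more careful than the paper's, since you explicitly invoke the multicomplex axiom to ensure $a^{(s)}\in\Gamma$ and spell out the coordinatewise inequalities yielding $a_j\le a_i\le b_j$.
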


\begin{proof} Assume that for some $i$,  say for $i=1$, we have $\infpt(a_1)\neq \emptyset$. We may assume that $a_1(1)=\infty$. Set $a=a_1$ and let $c$ be any integer. None of the faces $(c,a(2),\ldots,a(n))$ belong to $[a_1,b_1]$. Thus for each $c$ there exist an $i\in \{2,\ldots,t\}$ such that $(c,a(2),\ldots, a(n))\in [a_i,b_i]$. Hence for some $j>1$, infinitely of the vectors $(c,a(2),\ldots, a(n))$ belong to $[a_j,b_j]$. This is only possible if   $(\infty,  a(2),\ldots, a(n))$ belongs to $[a_j,b_j]$. This is  a contradiction,  since $a_1=(\infty,  a(2),\ldots, a(n))\in [a_1,b_1]$.
\end{proof}

Let $I\subset S=K[x_1,\ldots,x_n]$ be a monomial ideal. Any
decomposition of $S/I$ as a direct sum of $K$-vector spaces of the
form $uK[Z]$, where $u$ is a monomial in $K[X]$ and $Z\subset
X=\{x_1,\ldots,x_n\}$, is called a {\it Stanley decomposition} if $uK[Z]$ is a free $K[Z]$-module. In this paper we will call $uK[Z]$ a Stanley space of dimension $|Z|$, where $|Z|$ denotes the cardinality of $Z$.  Stanley decomposition have been studied in various combinatorial and algebraic contexts, see \cite{An1}, \cite{An2}, \cite{Ci1}, \cite{Ci2}, \cite{HJY}, \cite{HVZ}, \cite{Na}, \cite{Po1}, \cite{Po2}, \cite{As}.

Stanley \cite{St} conjectured that there always exists a Stanley
decomposition $$S/I=\bigoplus\limits_{i=1}^{r}u_iK[Z_i],$$
 such that $|Z_i|\geq \depth(S/I)$ for all $i$.

Next we describe how Stanley decompositions and partitions are related to each other. Let $\Gamma\subset \NN_\infty^n$ be a multicomplex, $[a,b]\subset \Gamma$ an interval and $U_{[a,b]}$ the $K$-subspace  of $S$ generated by all monomials $u=x_1^{c(1)}\cdots x_n^{c(n)}$ such that $c=(c(1),\ldots,c(n))\in [a,b]$. Then obviously one has that $U_{[a,b]}$
 is a Stanley space if and only if
 \begin{enumerate}
\item[(i)] $\infpt(a)=\emptyset$,

\item[(ii)] $i\not\in \infpt(b)$ $\Rightarrow$ $a(i)=b(i)$.
 \end{enumerate}
Indeed in this case  $U_{[a,b]}=x^{a}[Z_b]$, where $Z_b=\{x_i\:\, b(i)=\infty\}$.\\
Let $I\subset S$ be a monomial ideal and $\Gamma(I)$ be the multicomplex associated to $I$. Also let $S/I=\oplus_{i=1}^{r}x^{a_i}K[Z_i]$ be a Stanley decomposition of $S/I$. Set $b_i(j)=\infty$ if $x_j\in Z_i$ and $b_i(j)=a_i(j)$ if $x_j\not\in Z_i$.
Then  $\cup_{i=1}^{r}[a_i,b_i]$ is a partition of $\Gamma(I)$. For instance, if $a\in [a_i,b_i]\cap[a_j,b_j]\cap \NN^n$ for $i,j\in \{1,\ldots,r\}$ and $i\not=j$, then  $x^a\in {a_i}K[Z_i]\cup {a_j}K[Z_j]$, a contradiction. Thus $\cup_{i=1}^{r}[a_i,b_i]$ is disjoint. \\
Conversely, we observe that each interval $[a,b]$ with $\infpt(a)=\emptyset$ can be written as disjoint union of intervals
\begin{eqnarray}
\label{written}
[a,b]=\cup[c_i,b_i]
\end{eqnarray}
such that each $[c_i,b_i]$ corresponds to a Stanley space. Indeed, if, as we may assume,  for some integer $r$ we have that  $b(k)<\infty $ for $k\leq r$ and $b(k)=\infty$ for $k>r$. Then $[a,b]$ is the disjoint union of the intervals $$[(c(1),\ldots, c(r),a(r+1),\ldots, a(n)),(c(1),\ldots, c(r),\infty,\ldots, \infty)]$$ with $a(k)\leq c(k)\leq b(k)$ for $k=1,\ldots,r$, and each of these intervals satisfies (i) and (ii).
Therefore, due to (\ref{written}) and  Lemma~\ref{finite},  Stanley's conjecture holds  for $S/I$ if and only if there exists a
partition $\mathcal{P}: \Gamma=\bigcup\limits_{i=1}^{t}[a_i,b_i]$ of the multicomplex
$\Gamma=\Gamma(I)$ such that
\begin{eqnarray}
\label{nice}
 |\infpt(b_i)| \geq \depth(S/I(\Gamma)) \quad \text{for all}\quad i.
\end{eqnarray}

Any partition of a multicomplex satisfying condition (\ref{nice}) will be called {\em nice}.

\begin{Proposition}
\label{refine}
A partition $\mathcal{P}:\Gamma  =\Union_{i=1}^t[a_i,b_i]$ of the
multicomplex $\Gamma$ is a nice partition if $b_i\in
\mathcal{F}(\Gamma)$ for all $i$.
\end{Proposition}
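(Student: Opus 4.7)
The plan is to reduce the proposition to the pointwise bound
\[
|\infpt(F)| \geq \depth(S/I(\Gamma)) \qquad \text{for every } F \in \mathcal{F}(\Gamma).
\]
Once this is in hand, the proposition follows immediately by applying it with $F = b_i$, which is exactly the niceness condition (\ref{nice}).

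To prove the bound, I would show that the monomial prime $P_F := (x_i \colon i \notin \infpt(F))$ lies in $\Ass(S/I(\Gamma))$. Combined with the standard inequality $\depth(S/I(\Gamma)) \leq \dim(S/P)$ valid for any $P \in \Ass(S/I(\Gamma))$, together with the obvious computation $\dim(S/P_F) = |\infpt(F)|$, this closes the argument.

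For $P_F \in \Ass(S/I(\Gamma))$, I would proceed in two steps. First, by axiom (2) of Definition 1.1 there is some $m \in \mathcal{M}(\Gamma)$ with $F \leq m$; the facet property of $F$ forces $\infpt(F) = \infpt(m)$, so $P_F = P_m$ and it suffices to treat the case where $F$ is maximal. Second, I would invoke the classical correspondence between the maximal elements of $\Gamma(I)$ and the components of the irredundant irreducible monomial decomposition $I = \bigcap_j Q_j$: each $m \in \mathcal{M}(\Gamma)$ is the unique maximal face of the multicomplex of some $Q_j$, and one has $\sqrt{Q_j} = (x_i \colon i \notin \infpt(m)) = P_m$. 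Since $\Ass(S/I)$ is precisely the set of radicals $\{\sqrt{Q_j}\}_j$, this gives $P_m \in \Ass(S/I(\Gamma))$.

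The only non-elementary ingredient is the bijection between maximal faces of $\Gamma(I)$ and irreducible components of $I$ used in the last step, but this is standard (see \cite{HP}) and I would just cite it; the main obstacle is mostly notational bookkeeping, namely keeping straight the identification of $\infpt(m)$ with the set of variables absent from $\sqrt{Q_m}$. Everything else reduces to a short containment argument and a dimension count.
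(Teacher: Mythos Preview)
Your proposal is correct and follows essentially the same route as the paper's own proof: both arguments hinge on the bijection from \cite{HP} between $\mathcal{M}(\Gamma)$ and the irreducible components $Q_j$ of $I(\Gamma)$, the identification $\dim(S/\sqrt{Q_j}) = |\infpt(m_j)|$, the facet condition $\infpt(F)=\infpt(m)$, and the inequality $\depth(S/I)\le \dim(S/P)$ for $P\in\Ass(S/I)$. The only cosmetic difference is that the paper phrases the estimate as a chain of equalities and an inequality for the minimum over all facets, whereas you argue pointwise by exhibiting $P_F\in\Ass(S/I(\Gamma))$; the content is the same.
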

\begin{proof}
 Let $I(\Gamma)=\bigcap_{i=1}^{m}Q_i$ be the unique irredundant presentation of $I$ as an intersection irreducible monomial ideals, and let $P_i=\sqrt{Q_i}$ for $i=1,\ldots,m$.  Then  $\Ass(S/I)=\{P_1,\ldots,P_m\}$.

 By \cite[Proposition 9.12]{HP} there is a bijection between the $Q_i$ and the set of $\mathcal{M}(\Gamma)$ of maximal faces of $\Gamma$. In fact, for each $i$ there is a unique $m_i\in \mathcal{M}(\Gamma)$ such that $Q_i=I(\Gamma(m_i))$ where $\Gamma(m_i)$ denotes the smallest multicomplex containing $m_i$. The assignment $Q_i\mapsto m_i$ establishes this bijection. Moreover, $\dim S/P_i=\infpt(m_i)$ for all $i$.
Therefore,
\begin{eqnarray*}
\min\{|\infpt(b_i)|\: b_i\in \mathcal{F}(\Gamma)\}&=&
 \min\{|\infpt(m_j)|\: m_j\in \mathcal{M}(\Gamma)\}\\
 &=&\min\{\dim(S/P_j)\: P_j\in \Ass(S/I(\Gamma))\}\\
 &\geq &
 \depth(S/I(\Gamma)).
\end{eqnarray*}
The first equation follows from the definition of the facets, while the last inequality is a basic fact of commutative algebra, see \cite[Proposition 1.2.13]{BH2}. These considerations show that our given  partition is nice.
\end{proof}

\begin{Corollary}
 Let $I\subset S$ be a monomial ideal such that $S/I$ is Cohen-Macaulay.
 Let $\Gamma$ be the multicomplex associated to $I$ and
 $\mathcal{P}\: \Gamma =\bigcup\limits_{i=1}^{t}[a_i,b_i]$ be a partition of
$\Gamma$. Then the following conditions are equivalent.
\begin{enumerate}
\item[(a)] $\mathcal{P}$ is nice.

\item[(b)]$\{b_1,\ldots,b_t\}\subseteq \mathcal{F}(\Gamma)$

\item[(c)] $\mathcal{M}(\Gamma)\subseteq \{b_1,\ldots,b_t\}\subseteq \mathcal{F}(\Gamma)$
\end{enumerate}
\end{Corollary}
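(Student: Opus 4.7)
The strategy is a cycle (a) $\Rightarrow$ (b) $\Rightarrow$ (c) $\Rightarrow$ (a). Two of the three implications do not need the Cohen--Macaulay hypothesis at all, and I would dispose of them first, isolating the place where Cohen--Macaulayness is used.

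First, (c) $\Rightarrow$ (a) is immediate from Proposition~\ref{refine}: the second inclusion in (c) says $b_i\in \mathcal{F}(\Gamma)$ for every $i$, which is exactly the hypothesis of that proposition.

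Next, for (b) $\Rightarrow$ (c) the only thing to prove is $\mathcal{M}(\Gamma)\subseteq \{b_1,\ldots,b_t\}$, since the inclusion $\{b_1,\ldots,b_t\}\subseteq \mathcal{F}(\Gamma)$ is hypothesis (b). I would argue as follows: given $m\in \mathcal{M}(\Gamma)$, by the partition $\mathcal{P}$ there is an index $i$ with $m\in [a_i,b_i]$, so $m\leq b_i$; but $b_i\in \Gamma$ by the definition of an interval in $\Gamma$, and the maximality of $m$ forces $m=b_i$. This step uses nothing beyond the set-theoretic properties of the partition.

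The substantive implication is (a) $\Rightarrow$ (b), and this is where the Cohen--Macaulay assumption enters. Set $d=\depth(S/I)=\dim(S/I)$. Let $I=\bigcap_{j=1}^m Q_j$ be the irredundant decomposition into irreducible monomial ideals and $P_j=\sqrt{Q_j}$, so (as recalled in the proof of Proposition~\ref{refine}) the assignment $Q_j\mapsto m_j$ gives a bijection between these components and $\mathcal{M}(\Gamma)$, with $\dim(S/P_j)=|\infpt(m_j)|$. Since $S/I$ is Cohen--Macaulay it is unmixed, so $\dim(S/P_j)=d$ for every $j$; hence $|\infpt(m)|=d$ for every $m\in \mathcal{M}(\Gamma)$. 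Now fix $i$ and pick any $m\in \mathcal{M}(\Gamma)$ with $b_i\leq m$ (such an $m$ exists by axiom~(2) of a multicomplex). From $b_i\leq m$ one has $\infpt(b_i)\subseteq \infpt(m)$, hence $|\infpt(b_i)|\leq |\infpt(m)|=d$, while the niceness assumption (a) gives $|\infpt(b_i)|\geq d$. Therefore $|\infpt(b_i)|=|\infpt(m)|$, and combined with the inclusion this yields $\infpt(b_i)=\infpt(m)$ for every such $m$, i.e.\ $b_i\in \mathcal{F}(\Gamma)$.

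The only subtle point I expect is making sure the unmixedness is used correctly: in general one only has $\min_j\dim(S/P_j)\geq \depth(S/I)$, which is why Proposition~\ref{refine} is only one direction; the converse fails without equidimensionality, and the Cohen--Macaulay hypothesis is precisely what is needed to close the loop.
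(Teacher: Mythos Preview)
Your argument is correct and follows the same cycle (a) $\Rightarrow$ (b) $\Rightarrow$ (c) $\Rightarrow$ (a) as the paper, with the same reasoning for (b) $\Rightarrow$ (c) and (c) $\Rightarrow$ (a). For (a) $\Rightarrow$ (b) the paper simply asserts that in the Cohen--Macaulay case $|\infpt(b)|\leq \depth(S/I)$ for every face $b$, with equality precisely for facets; your version unpacks this via unmixedness and the bijection $Q_j\leftrightarrow m_j$, which is exactly what underlies that assertion, so the two proofs coincide in substance.
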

\begin{proof}(a)\implies (b): In case $S/I$ is Cohen-Macaulay we have $|\infpt(b)|\leq \depth(S/I)$ for all faces of $\Gamma$, and equality holds for $b$ if and only if $b$ is a facet. Thus $\mathcal P$ can be nice only if $\{b_1,\ldots,b_t\}\subseteq \mathcal{F}(\Gamma)$.

(b)\implies (c): Let  $m\in \mathcal{M}(\Gamma)$; then $m\in [a_i,b_i]$ for some $i$. Since $m\leq b_i$ and since $m$ is maximal it follows that  $m=b_i$.  Thus $\mathcal{M}(\Gamma)
\subseteq \{b_1,\ldots,b_t\}$.

(c)\implies (a) follows from Proposition~\ref{refine}.
\end{proof}
\begin{Remark}
{\em In the above Corollary if $\mathcal{P}$ is nice then we can refine
it in such a way that for the refinement
$$\mathcal{P}': \Gamma =\Union_{i=1}^{t'}[a'_i,b'_i]$$ we have
$\{ b'_1,\ldots, b'_{t'}\}=\mathcal{F}(\Gamma)$. To prove this fact we first observe that $|\infpt(a_i)|=0$ for all $i$, see Lemma 1.2. Since $\mathcal{F}(\Gamma)=\Union_{i=1}^t(\mathcal{F}(\Gamma)\sect [a_i,b_i])$, it is enough to write each interval $[a_i,b_i]$ as a disjoint union of intervals $\Union_{j=1}^{l}[c_j, e_j]$ where $\{e_1,e_2,\ldots,e_l\}=\mathcal{F}(\Gamma)\sect [a_i,b_i]$.\\
For simplicity, we may assume that $b_i(k)<\infty$ for $k\leq r$ and   $b_i(k)=\infty$  for $k>r$. Then $e\in [a_i,b_i]$ is a facet of $\Gamma$ if and only if $a_i(k)\leq e(k)\leq b_i(k)$ for $k\leq r$ and   $e(k)=\infty$  for $k>r$. Thus if we set $c_j(k)=e_j(k)$ for $k\leq r$ and $c_j(k)=a_i(k)$ for $k> r$, then $[a_i,b_i]= \Union_{j=1}^{l}[c_j, e_j]$ is the desired refinement of $[a_i,b_i]$.}
\end{Remark}
\goodbreak

\section{Partitions and Polarization}
    Let $S=K[x_1,\ldots,x_n]$ be the polynomial ring in $n$
    variables over the field $K$, and $u=\prod\limits_{i=1}^{n}x_i^{a_i}$
    be the monomial in $S$. Then
    $$u^p=\prod\limits_{i=1}^{n} \prod\limits_{j=1}^{a_i} x_{ij} \in
    K[x_{11},\ldots,x_{1a_1},\ldots,x_{n1},\ldots,x_{na_n}]$$
is called the {\it polarization} of $u$.

 Let $I$ be the monomial ideal in $S$ with monomial generators
 $u_1,\ldots,u_{r}$. Then $(u_1^p,\ldots,u_{r}^p)$ is called a {\it
 polarization} of $I$ and is denoted by $I^p$. It is known that $I$ is Cohen-Macaulay if and only if $I^p$ is Cohen-Macaulay. Indeed, the elements
$x_{ij}-x_{i1}$, $i=1,\ldots,n$ and $j=1,2, \ldots$ form a regular sequence on $T/I^p$, and $T/I^p$ modulo this regular sequence is isomorphic to $S/I$.

Let $I=(u_1,\ldots,u_{s})\subset S$ be a monomial ideal. We may assume that for each $i\in[n]$ there exists $j$ such that $x_i$ divides $u_j$. Let $u_j=x_1^{a_{j1}}\cdots x_n^{a_{jn}}$ for $j=1,\ldots, s$ and set $r_i=\max{a_{ji}\:\; j=1,\ldots, s}$ for $i=1,\ldots, n$.  Moreover we set
$r=\sum_{i=1}^{n} r_i.$

Let  $I=\bigcap_{i=1}^{t}Q_i$  be the unique irredundant presentation of $I$ as an intersection of irreducible monomial ideals. In particular, each $Q_i$ is
 generated by pure powers of some of the variables. Then $I^p=\bigcap_{i=1}^{t_1}
 Q_i^p$ is an ideal in the polynomial ring $$T=K[x_{11},\ldots,x_{1r_1},x_{21},\ldots,x_{n1},\ldots,x_{nr_n}]$$
 in $r$ variables.

 We denote by $\Gamma$, $\Gamma^p$, $\Gamma_i$ and $\Gamma_i^p$ the
 multicomplexes associated to $I$, $I^p$, $Q_i$ and $Q_i^p$,
 respectively, and by $\mathcal F$, ${\mathcal F}^p$, ${\mathcal F}_i$ and ${\mathcal F_i}^p$ the set of
 facets of $\Gamma$, $\Gamma^p$, $\Gamma_i$ and $\Gamma_i^p$,
 respectively.

 Each $\Gamma_i$ has only one maximal facet,
 say $m_i$, and $m_i(k)\leq r_k-1$ for all $k$ with  $m_i(k)\not=
 \infty.$ Moreover, ${\mathcal M}(\Gamma)=\{m_1,\ldots, m_t\}$. It follows that the set of facets of $\Gamma$ is a subset of the set
\[
\mathcal{B}=\{b\in \mathbb{N}_{\infty}^n:\text{$b(i)<r_i$ if $b(i)\neq \infty$}\}.
\]

We define the map
$$\beta:\mathcal{B}\rightarrow \{0,\infty \}^r,\quad  b\mapsto b',$$
where the components of the vectors $b'$ are indexed by pairs of numbers $ij$, where for each $i=1,\ldots,n$ the second index $j$ runs in the range $j=1,\ldots, r_i$.
The map $\beta$ is defined as follows:
\[
b'(ij)=\left\{ \begin{array}{ll}
0, &  \;\text{if $b(i)<\infty$ and $j=b(i)+1$},\\
\infty, &  \;\text{otherwise.}
 \end{array} \right.
\]
We quote the following result by Soleyman Jahan \cite[Proposition 3.8]{Ja}
\begin{Proposition}
With the above assumptions and notation the restriction of the map
$\beta$ to $\mathcal{F}$ induces a bijection  $\mathcal{F}\to \mathcal{F}^p$.
\end{Proposition}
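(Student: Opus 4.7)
The plan is to route the bijection through the set of minimal primes of $I^p$, exploiting the squarefreeness of $I^p$. The first observation is that $\mathcal{F}^p=\mathcal{M}(\Gamma^p)$: since $I^p$ is squarefree, its minimal primes are generated by variables, so every $m^p\in\mathcal{M}(\Gamma^p)$ has entries only in $\{0,\infty\}$. If $a'\in\mathcal{F}^p$ satisfies $a'\leq m^p$, then $\infpt(a')=\infpt(m^p)$ forces $a'$ to agree with $m^p$ on all infinite positions, and $m^p(jk)=0$ on the remaining positions forces $a'(jk)=0$ there. Hence $a'=m^p$, so it suffices to produce a bijection $\mathcal{F}\to\mathcal{M}(\Gamma^p)$.

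Next, I would parameterize $\mathcal{M}(\Gamma^p)$. Using $I^p=\bigcap_i Q_i^p$ together with rule~(b) from the introduction, one obtains $\Gamma^p=\bigcup_i\Gamma_i^p$. For each $i$, the polarization $Q_i^p$ of the irreducible pure-power ideal $Q_i$ is a complete intersection of products of disjoint variables whose minimal primes are the ideals $P_{i,L}=(x_{j,L(j)}\colon j\in S_i)$, indexed by functions $L\colon S_i\to\mathbb{Z}$ with $1\leq L(j)\leq m_i(j)+1$, where $S_i=\{j\colon m_i(j)<\infty\}$. Consequently $\Min(I^p)$ is the set of inclusion-minimal elements of $\bigcup_i\Min(Q_i^p)=\{P_{i,L}\}$. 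To each $P_{i,L}$ attach the vector $a_{i,L}\in\mathcal{B}$ with $a_{i,L}(j)=L(j)-1$ for $j\in S_i$ and $a_{i,L}(j)=\infty$ otherwise; a direct check against the definition of $\beta$ yields $\beta(a_{i,L})=m_{P_{i,L}}^p$, and $\beta$ is manifestly injective on $\mathcal{B}$.

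The heart of the argument is the equivalence $a_{i,L}\in\mathcal{F}$ iff $P_{i,L}\in\Min(I^p)$. Comparing generators, one has $P_{j,L'}\subsetneq P_{i,L}$ iff $S_j\subsetneq S_i$, $L'=L|_{S_j}$, and the validity bound $L(k)\leq m_j(k)+1$ holds for every $k\in S_j$. Translated to the $a$-language these conditions become $\infpt(m_j)\supsetneq\infpt(a_{i,L})$ together with $a_{i,L}\leq m_j$. Hence $P_{i,L}$ fails to be minimal over $I^p$ iff there exists $m_j\in\mathcal{M}(\Gamma)$ with $a_{i,L}\leq m_j$ and $\infpt(m_j)\supsetneq\infpt(a_{i,L})$---precisely the negation of the facet condition on $a_{i,L}$. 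The main obstacle lies in this third step: one must faithfully translate the ideal-theoretic containment of primes into $\leq$- and $\infpt$-comparisons on multicomplex elements, with care for the validity bound on $L$; once this dictionary is set up, everything else is routine bookkeeping.
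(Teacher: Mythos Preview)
The paper does not give its own proof of this proposition; it is quoted verbatim from Soleyman Jahan \cite[Proposition~3.8]{Ja} and used as a black box. So there is nothing to compare against line by line.

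Your argument is a correct self-contained proof. The reduction $\mathcal{F}^p=\mathcal{M}(\Gamma^p)$ via squarefreeness of $I^p$ is sound, and the decomposition $I^p=\bigcap_i Q_i^p=\bigcap_{i,L}P_{i,L}$ gives exactly the dictionary you need between minimal primes of $I^p$ and maximal faces of $\Gamma^p$. The key step, translating $P_{j,L'}\subsetneq P_{i,L}$ into $a_{i,L}\leq m_j$ together with $\infpt(m_j)\supsetneq\infpt(a_{i,L})$, is handled correctly: containment of monomial primes forces $S_j\subseteq S_i$ and $L'=L|_{S_j}$, and the validity bound $L(k)\leq m_j(k)+1$ becomes $a_{i,L}(k)\leq m_j(k)$ on $S_j$, while outside $S_j$ one has $m_j(k)=\infty$. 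Two small points worth making explicit for completeness: (i) every facet $b\in\mathcal{F}$ arises as some $a_{i,L}$ (pick any $m_i\geq b$ with $\infpt(b)=\infpt(m_i)$ and set $L(j)=b(j)+1$), so the correspondence really covers all of $\mathcal{F}$; and (ii) the assignment $(i,L)\mapsto a_{i,L}$ and $(i,L)\mapsto P_{i,L}$ both factor through the same data $(S_i,L)$, so the map $\mathcal{F}\to\Min(I^p)$ is well defined independent of the choice of $i$. With those remarks in place your proof is complete.
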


The following example demonstrates this bijection: let
$I=(x_1^2,x_1x_2,x_3^2)=(x_1,x_3^2)\cap(x_1^2,x_2,x_3^2)\subset
K[x_1,x_2,x_3]$. Then the multicomplex $\Gamma$
associated to $I$ has facets
$$(0,\infty,0),(0,\infty,1),(1,0,0),(1,0,1),$$
while the multicomplex of the polarized ideal $$I^p=(x_{11}x_{12},x_{11}x_{21},x_{31}x_{32})\subset
K[x_{11},x_{12},x_{21},x_{31},x_{32}].$$ has the facets
$$(0,\infty,\infty,0,\infty),(0,\infty,\infty,\infty,0),
(\infty,0,0,0,\infty),(\infty,0,0,\infty,0).$$

\medskip
Let $\Gamma=\bigcup_{i=1}^{t}[a_i,b_i]$ be a nice
 partition of $\Gamma$ with
 $\mathcal{F}(\Gamma)=\{b_1,\ldots,b_t\}$. With the notation introduced above we have

 \begin{Lemma}
 \label{bound}
 $a_i(j)\leq r_j$ for all $i$ and $j$.
 \end{Lemma}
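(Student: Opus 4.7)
My plan is to split the argument into two cases depending on whether $b_i(j)$ is infinite. If $b_i(j)<\infty$, then since $b_i$ is a facet (by the standing hypothesis $\mathcal{F}(\Gamma)=\{b_1,\ldots,b_t\}$) and every facet lies in $\mathcal{B}$, we have $b_i(j)\le r_j-1$, and $a_i\le b_i$ immediately yields $a_i(j)\le r_j$.

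The essential case is $b_i(j)=\infty$, for which I argue by contradiction, assuming $a_i(j)>r_j$. The idea is to produce two auxiliary faces that agree with $a_i$ on every coordinate except the $j$-th: the face $v$ defined by $v(j)=r_j$ and $v(k)=a_i(k)$ for $k\ne j$, and the face $w$ defined by $w(j)=\infty$ and $w(k)=a_i(k)$ for $k\ne j$. Since $v\le a_i\in\Gamma$, we get $v\in\Gamma$; since $w\le b_i\in\Gamma$ (using $b_i(j)=\infty$), we get $w\in\Gamma$. Moreover $w\in[a_i,b_i]$, whereas $v\notin[a_i,b_i]$ because $v(j)=r_j<a_i(j)$. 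Hence by the partition, $v\in[a_l,b_l]$ for some $l\ne i$.

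To close the argument, I would exploit the facet hypothesis on $b_l$. From $v\in[a_l,b_l]$ one reads off $b_l(j)\ge v(j)=r_j$ together with $a_l(k)\le a_i(k)\le b_l(k)$ for every $k\ne j$. There are now only two sub-cases. If $b_l(j)=\infty$, then the same inequalities show that $w\in[a_l,b_l]$ as well, contradicting the disjointness of the partition since $w\in[a_i,b_i]$. If instead $b_l(j)<\infty$, then the fact that $b_l$ is a facet forces $b_l(j)\le r_j-1$, contradicting $b_l(j)\ge r_j$. Either way we reach a contradiction, so $a_i(j)\le r_j$. The only mild obstacle is arranging $v$ and $w$ so that the facet condition on $b_l$ squeezes $b_l(j)$ between $r_j$ and $r_j-1$; once these are in place the conclusion is forced by the partition property and the facet hypothesis.
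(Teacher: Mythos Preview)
Your proof is correct and follows essentially the same route as the paper's. Both arguments first dispose of the case $b_i(j)<\infty$ via the facet inclusion $\mathcal{F}(\Gamma)\subset\mathcal{B}$, then in the case $b_i(j)=\infty$ consider the element obtained from $a_i$ by replacing the $j$-th coordinate with $r_j$, locate it in some other interval $[a_l,b_l]$, force $b_l(j)=\infty$ via the facet condition again, and finish by exhibiting an element in both intervals. The only cosmetic difference is the witness for the final contradiction: the paper uses $a_i$ itself (noting $a_l\le v\le a_i\le b_l$), whereas you use the element $w$ with $w(j)=\infty$.
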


 \begin{proof}
 Suppose without loss of generality that $a_1(1)> r_1$. Then $b_1(1)=\infty$, because if  $b_1(1)<\infty$ it follows that $a_1(1)\leq b_1(1)<r_1$, a contradiction.  Now since $\Gamma =\Union_{i=1}^t[a_i,b_i]$ and since $a=(r_1,a_1(2),\ldots, a_1(n))\in \Gamma \setminus [a_1,b_1]$, there exists $i>1$ such that $a\in [a_i,b_i]$. As above $b_i(1)=\infty$ because if $b_i(a)<\infty$ then $r_1\leq b_i(1)<r_1$, which is not possible. Hence we conclude that $a_i\leq a<a_1<b_i$ $\Rightarrow$ $a_1\in [a_i,b_i]$, a contradiction.
 \end{proof}

We want to ``polarize" the nice  partition $\Gamma=\bigcup_{i=1}^{t}[a_i,b_i]$.
For this purpose we consider the set
 $\A=\{a\in \mathbb{N}:\text{$a(i)\leq r_i$}\}$ and the following map $\gamma:\A\rightarrow \{0,1\}^r$ with
 \[
\gamma(a)(ij)=\left\{ \begin{array}{ll}
0, &  \;\text{if  $j>a(i)$},\\
1, &  \;\text{otherwise.}
 \end{array} \right.
\]
We observe that $\gamma$ is injective. Indeed, for  $a\neq a'$ there exists $i$ such that $a(i)\neq a'(i)$, say, $a(i)< a'(i)$. Then $a(ij)=0$ for $j=a(i)+1$, while $a'(ij)=1$ for $j=a(i)+1$.

\medskip
Let $\mathcal{I}=[a,b]\subset\Gamma\subset\mathbb{N}^n_{\infty}$ be an interval
such that $a=(a(1),a(2),\ldots,a(n))$ and
$b=(b(1),b(2),\ldots,b(n))$. We define an {\it i-subinterval} as $$\{c
\in\mathbb{N}_{\infty}:a(i)\leq c\leq
b(i)\}$$ and denote it by $\mathcal{I}(i)=[a(i),b(i)]$.

\begin{Example}
Let $a,b\in \Gamma\subset\mathbb{N}^2_{\infty}$ $a=(2,5),b=(4,\infty)$, then \\
$\mathcal{I}(1)=[a(1),b(1)]=[2,4]$ i.e. $\mathcal{I}(1)=\{2,3,4\},$\\
$\mathcal{I}(2)=[a(2),b(2)]=[5,\infty]$ i.e. $\mathcal{I}(2)=\{5,6,\ldots\}.$\\
\end{Example}
Next we need the following elementary lemma.
\begin{Lemma}
Let $\mathcal{I}_1,\mathcal{I}_2$ be two intervals of a multicomplex
$\Gamma\subset\mathbb{N}_{\infty}^n$ such that $\mathcal{I}_1=[a,b]$ and
$\mathcal{I}_2=[c,d]$. Suppose $\mathcal{I}_1\cap \mathcal{I}_2=\emptyset$, then there exists $i$ such that $\mathcal{I}_1(i)\cap
\mathcal{I}_2(i)=\emptyset.$
\end{Lemma}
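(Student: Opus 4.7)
The plan is to prove the contrapositive: assuming $\mathcal{I}_1(i)\cap \mathcal{I}_2(i)\neq \emptyset$ for every coordinate $i=1,\ldots,n$, I will construct an explicit element of $\mathcal{I}_1\cap \mathcal{I}_2$. The construction is coordinatewise and exploits the fact that the partial order on $\NN_\infty^n$ is the product order, so that an interval $[a,b]$ in $\Gamma$ is, as a subset of $\NN_\infty^n$, the intersection of the ``coordinate slabs'' $\{x: a(i)\le x(i)\le b(i)\}$.

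For each $i$, the intersection of the one-dimensional intervals is
\[
 \mathcal{I}_1(i)\sect \mathcal{I}_2(i) = [\max\{a(i),c(i)\},\,\min\{b(i),d(i)\}],
\]
and by hypothesis this is nonempty, so $\max\{a(i),c(i)\}\le \min\{b(i),d(i)\}$ for each $i$. Define $e\in \NN_\infty^n$ by $e(i)=\max\{a(i),c(i)\}$. Then by construction $a\le e$, $c\le e$, $e\le b$, and $e\le d$.

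The only remaining point is to check that $e$ actually lies in $\Gamma$, not merely in $\NN_\infty^n$. This is where axiom (1) of Definition~1.1 enters: since $e\le b$ and $b\in \Gamma$, the downward closure of $\Gamma$ gives $e\in \Gamma$. Consequently $e\in [a,b]\sect [c,d]=\mathcal{I}_1\sect \mathcal{I}_2$, contradicting the disjointness hypothesis.

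There is essentially no obstacle here; the lemma is a routine reformulation of the product-order structure, and the only subtlety worth flagging is the invocation of the multicomplex axiom to ensure that the candidate element $e$ constructed coordinatewise belongs to $\Gamma$. In particular the argument does not require any use of the upper end $b\in \mathcal{M}(\Gamma)$ or of $\infpt(\,\cdot\,)$: the coordinatewise maximum automatically respects $\infty$ entries, since $\max\{a(i),c(i)\}\le \min\{b(i),d(i)\}\le b(i)$ forces $e(i)<\infty$ whenever $b(i)<\infty$.
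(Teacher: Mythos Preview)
Your argument is correct. The paper itself does not supply a proof of this lemma; it is introduced merely as ``the following elementary lemma'' and left to the reader. Your contrapositive construction of $e(i)=\max\{a(i),c(i)\}$ is the standard and natural way to fill this in, and your explicit appeal to axiom~(1) of Definition~1.1 to guarantee $e\in\Gamma$ is the one point that deserves mention.
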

 Let $I\subset S$ be a monomial ideal and let
 $S/I=\bigoplus\limits_{i=1}^{r}u_iK[Z_i]$ be its Stanley
 decomposition, where $u_i=x^{a_i}$ for $i=1,\ldots,r$. Then the
 Hilbert series is given by $H(S/I)=\sum_{i=1}^r
t^{|a_i|}/(1-t)^{|Z_i|}$, where $|a_i|$ denotes the sum of the  components of $a_i$ and $|Z_i|$ the cardinality of $Z_i$. Thus if $\Gamma$ is the multicomplex
associated to $I$ and  $\Gamma=\bigcup\limits_{i=1}^{t}
[a_i,b_i]$ the corresponding  partition (with $b_i(j)=a_i(j)$ for $x_j\not\in Z_i$ and $b_i(j)=\infty$ for $x_j\in Z_i$), then $H(S/I)=\sum_{i=1}^r
t^{|a_i|}/(1-t)^{|b_i|_{\infty}},$ where
$|b_i|_{\infty}=|\infpt{b_i}|$.

\begin{Theorem} Let $I\subset S$ be a monomial ideal such
that $S/I$ is Cohen-Macaulay, and $I^p$ be the polarization of $I$.
 Suppose $I$ satisfies the Stanley Conjecture, then $I^p$ satisfies it too.
\end{Theorem}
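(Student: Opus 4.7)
The plan is to polarize the given nice partition of $\Gamma=\Gamma(I)$ by applying the maps $\gamma$ and $\beta$ coordinatewise, and to verify that the resulting decomposition of $\Gamma^p$ is again a nice partition. By Corollary~1.4, the Cohen-Macaulay hypothesis lets us start from a nice partition $\Gamma = \Union_{i=1}^{t}[a_i,b_i]$ of Stanley-space intervals with $\{b_1,\dots,b_t\}\subseteq\mathcal{F}(\Gamma)$; in particular $\infpt(a_i)=\emptyset$ and $a_i(k)=b_i(k)$ whenever $b_i(k)<\infty$. Set $a_i':=\gamma(a_i)\in\{0,1\}^r$ and $b_i':=\beta(b_i)\in\{0,\infty\}^r$. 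A direct coordinatewise check confirms $a_i'\leq b_i'$ and that $[a_i',b_i']$ is itself a Stanley-space interval in $\Gamma^p$: $\infpt(a_i')=\emptyset$ by construction of $\gamma$, and at each index $kj$ with $b_i'(kj)=0$ one has $b_i(k)<\infty$ and $j=b_i(k)+1=a_i(k)+1$, so $a_i'(kj)=0$ as well. Moreover, Proposition~2.1 guarantees that each $b_i'$ is a facet of $\Gamma^p$.

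The main step is to prove that the candidate decomposition $\Gamma^p=\Union_{i=1}^{t}[a_i',b_i']$ is indeed a disjoint covering. For this I would introduce a depolarization map $p:\Gamma^p\to\NN_\infty^n$ by setting $p(c)(k)$ equal to the length of the longest initial prefix $c(k,1),c(k,2),\dots$ of strictly positive entries in the $k$-th block of coordinates of $c$ (with $p(c)(k)=\infty$ when every entry of that block is positive). A short divisibility argument shows $p(c)\in\Gamma$ for every $c\in\Gamma^p$: if $p(c)\notin\Gamma$, some generator $u=\prod_k x_k^{d_k}$ of $I$ would satisfy $d_k\leq p(c)(k)$ for all $k$, which immediately gives $u^p\mid x^c$, contradicting $c\in\Gamma^p$. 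Hence $p(c)$ lies in exactly one interval $[a_i,b_i]$ of the original partition.

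The crux is then the equivalence
$$c\in [a_i',b_i']\quad\iff\quad p(c)\in [a_i,b_i],$$
which transfers the disjointness of $\Union [a_i,b_i]$ to the polarized setting. The forward implication follows immediately from the definitions of $\gamma$ and $\beta$. For the reverse implication the Stanley-space condition $a_i(k)=b_i(k)$ for finite $b_i(k)$ is essential: at such coordinates the constraint $a_i(k)\leq p(c)(k)\leq b_i(k)$ collapses to $p(c)(k)=a_i(k)$, which by definition of $p$ is equivalent to $c(k,a_i(k)+1)=0$ --- precisely the nontrivial upper-bound condition encoded by $b_i'$; the lower-bound condition and the coordinates with $b_i(k)=\infty$ are routine. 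With the equivalence in hand, $\Gamma^p=\Union [a_i',b_i']$ is a partition, and Proposition~1.3 applied to $T/I^p$ (which is Cohen-Macaulay precisely because $S/I$ is) yields that it is nice. The main obstacle is identifying the correct depolarization $p$: the polarized lower endpoints live in $\{0,1\}^r$ as prefix-indicator vectors while the upper endpoints live in $\{0,\infty\}^r$, and the prefix length $p(c)(k)$ is the exact bridge back to the $\NN_\infty^n$-valued data of the original partition.
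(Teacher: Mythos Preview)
Your argument is correct and follows a genuinely different route from the paper's. Both proofs apply $\gamma$ and $\beta$ to the endpoints of a nice partition $\Gamma=\bigcup_i[a_i,b_i]$ into Stanley-space intervals with facet upper bounds, and must then show that $\bigcup_i[\gamma(a_i),\beta(b_i)]$ is a disjoint cover of $\Gamma^p$. The paper handles disjointness by a direct coordinate chase via Lemma~2.4 (locating an index $i_1$ where the original $i_1$-subintervals already miss each other and pushing the contradiction through $\gamma$ and $\beta$), and handles covering \emph{indirectly} by a Hilbert-series computation: once disjointness is known, the identity $H\bigl(\bigcup_i[\bar a_i,\bar b_i]\bigr)=(1-s)^{-n_1}H(S/I)=H(T/I^p)$ forces the union to be all of $\Gamma^p$. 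You instead construct an explicit depolarization $p:\Gamma^p\to\Gamma$ and prove the single equivalence $c\in[a_i',b_i']\iff p(c)\in[a_i,b_i]$, which delivers disjointness and covering in one stroke. Your approach is more combinatorial and self-contained (no Hilbert series), and it does not require the refinement of Remark~1.5 to $\{b_i\}=\mathcal F(\Gamma)$ that the paper invokes; the paper's Hilbert-series trick, by contrast, is a quick global check once local disjointness is secured. Both routes ultimately hinge on the same two ingredients: the Stanley-space condition $a_i(k)=b_i(k)$ at finite coordinates, and the facet bound $b_i(k)<r_k$ ensuring that the critical coordinate $(k,b_i(k)+1)$ exists in the polarized ring.
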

\begin{proof} Let $\Gamma$ be the multicomplex
associated to $I$. Since $I$ satisfies the Stanley Conjecture, $\Gamma$ has a nice partition. Let $\Gamma^p$ be the multicomplex
associated to
$I^p$. Then we show that $\Gamma^p$ has a nice partition. \\Let
$\Gamma=\bigcup_{i=1}^{\hat{t}}[\hat{a}_i,\hat{b}_i]$ be a nice
partition of $\Gamma$ then by Corollary 1.4, $\hat{b}_i\in
\mathcal{F}(\Gamma)$ for all $i$.
 Again by Remark 1.5, we can refine this partition to another nice partition say
 $\mathcal{P}: \Gamma =\bigcup_{i=1}^{t}[a_i,b_i]$ such that
$\{ b_1,\ldots, b_{t}\}=\mathcal{F}(\Gamma)$.\\
Let $\beta$ and $\gamma$ be the functions  defined above and set
$\beta(b_i)=\bar{b}_i$ and $\gamma(a_i)=\bar{a}_i$ for all
$i=1,\ldots,t'$. We will show that
$\mathcal{P}^p:\Gamma^p=\bigcup_{i=1}^{t}[\bar{a}_i,\bar{b}_i]$
 is a nice partition of $\Gamma^p$.\\
$\mathcal{P}^p$ is a partition if the intervals
$[\bar{a}_i,\bar{b}_i]$ are disjoint for all $i=1,\ldots,t$ and
$\mathcal{P}^p$ covers all the faces of $\Gamma^p$.\\
Suppose that the intervals are not disjoint and say there exist a
face $a\in [\bar{a}_i,\bar{b}_i]\cap[\bar{a}_j,\bar{b}_j]$ for some
$i\not=j$, $i,j\in \{1,\ldots,t\}$. Since $a_i\not= a_j$ we get
$\bar{a}_i\not=\bar{a}_j$, $\gamma$
being injective. \\
The intervals $[a_i,b_i]$ and $[a_j,b_j]$ are disjoint and so by
Lemma 2.4, there exists at least one pair of $i_1-$subintervals say
$[a_i(i_1),b_i(i_1)]$ and $[a_j(i_1),b_j(i_1)]$ for
$i_1\in\{1,\ldots,n\}$ such that
$[a_i(i_1),b_i(i_1)]\cap[a_j(i_1),b_j(i_1)]=\emptyset.$\\
So at least one of $\, b_i(i_1),b_j(i_1) \, $ is finite say
$b_i(i_1)\not= \infty$, and so by condition $(ii)$ on page 4,
$b_i(i_1)=a_i(i_1)$. Also we can assume that
$a_i(i_1)< a_j(i_1).$ If not and $b_j(i_1)=\infty$
then $[a_i(i_1),b_i(i_1)]\subset [a_j(i_1),b_j(i_1)]$ which is not possible, if $b_j(i_1)< \infty$ then change $i$ by $j$.\\
Let $a_i(i_1)=b_i(i_1)=k$ and $a_j(i_1)=m>k.$ Then by definition of
$\gamma$ and $\beta$ we have
$\bar{a}_i(i_1\overline{k+1})=0=\bar{b}_i(i_1\overline{k+1})$ and
$\bar{a}_j(i_1l)=1$ for $l\leq m$, thus $\bar{a}_j(i_1\overline{k+1})=1.$\\
It follows $a(i_1\overline{k+1})=0.$ On the other hand since $a\geq
\bar{a}_j$ we get $a(i_1\overline{k+1})\geq\bar{a}_j(i_1\overline{k+1})=1.$ A
contradiction.\\
Now for the second part of the proof, we will use the Hilbert
series. We have  $H(S/I)=\sum_{i=1}^t
s^{|a_i|}/(1-s)^{|b_i|_{\infty}}$. The definition of the function
$\gamma$ implies  that $|a_i|=|\bar{a}_i|$ for all
$i=\{1,\ldots,t\}$. Now for each polarization step, the depth of
$S/I$ increases by 1. Also by definition of $\beta$ for each
polarization step, number of infinite points increases by 1. Thus
after $n_1$ polarization steps
$|infpt(\bar{b}_i)|=|infpt(b_i)|+n_1.$ So
$$H(\bigcup_{i=1}^t[\bar{a}_i,\bar{b}_i])=\sum\limits_{i=1}^t
\frac{s^{|a_i|}}{(1-s)^{|b_i|_{\infty}+n_1}}
=\frac{1}{(1-s)^{n_1}}H(S/I)
$$
is in fact the Hilbert series of $H(S^p/I^p)$. Hence
$S^p/I^p=\cup_{i=1}^{t}[\bar{a}_i,\bar{b}_i].$\\
Note that $\mathcal{P}^p$ is a nice partition because $|\bar{b}_i|_{\infty}=|b_i|_{\infty}+n_1\geq \depth_S(S/I)+n_1=\depth_{S^p}(S^p/I^p)$ for all $i$.
\end{proof}

The converse of above theorem is still open. If one can proof the converse, then the Stanley Conjecture will reduce to the case of squarefree monomial ideals $I$ where $I$ is Cohen Macaulay.

\end{document}